\documentclass[12pt]{amsart}

\linespread{1.2}
\usepackage{graphicx,xcolor}
\usepackage{amsmath,amsthm,amssymb}
\newtheorem{theorem}{Theorem}
\newtheorem{lemma}{Lemma}
\newcommand{\bD}{\mathbb{D}}
\newcommand{\abs}[1]{\left\lvert #1 \right\rvert}

\begin{document}

\title[Extr. problems for trinomials with fold symmetry]
{Some extremal problems for trinomials with fold symmetry}

\author{Dmitriy Dmitrishin} 
\address{Department of Applied Mathematics, Odessa National Polytechnic University,  Odessa, 65044, Ukraine}
\email{dmitrishin@op.edu.ua }

\author{Daniel Gray}
\address{Department of Mathematics,  Georgia Southern University, Statesboro GA, 30460, USA}

\author{Alex Stokolos}
\address{Department of Mathematics,  Georgia Southern University, Statesboro GA, 30460, USA}


\begin{abstract}
The famous T. Suffridge polynomials have many extremal properties: the maximality of coefficients when the leading 
coefficient is maximal; the zeros of the derivative are located on the unit circle; the maximum radius of stretching the unit 
disk with the schlicht normalization $F(0)=0$, $F'(0)=1$; the maximum size of the unit disk contraction in the direction 
of the real axis for univalent polynomials with the normalization $F(0)=0$, $F(1)=1.$ However, under the standard 
symmetrization method $\sqrt[T]{F(z^T)}$, these polynomials become functions which are not polynomials. How can we 
construct the polynomials with fold symmetry that have properties similar to those of the Suffridge polynomial? What
values will the corresponding extremal quantities take in the above-mentioned extremal problems? The paper is devoted 
to solving these questions for the case of the trinomials $F(z)=z+az^{1+T}+bz^{1+2T}$. Also, there are
suggested hypotheses for the general case in the work.

\noindent \textit{Keywords.} Suffridge polynomials, polynomials symmetrization, domain of univalence of trinomials with
fold symmetry, extremal univalent trinomials with fold symmetry.  
\end{abstract}


\subjclass[2000]{30C10,30C25,30C55,30C75}

\maketitle

\section{Introduction}

This work is motivated by problems of classical geometric complex analysis, which studies various extremal properties
of the univalent (or similar in properties to univalent) in the central unit disk $\bD=\{z\in\mathbb{C}: \abs{z}<1\}$ functions
$F(z)$ with different normalizations. 

One of the first fundamental works in this theory was the 1916 paper by Ludwig Bieberbach, in which he proved the 
exact estimate for the second coefficient of the univalent in $\bD$ functions of the form
\begin{equation}\label{F}
F(z)=z+\sum_{j=2}^{\infty}a_j z^j,
\end{equation}
namely, $\abs{a_2} \le2$. This estimate immediately implies the famous Koebe $1/4$ theorem. In that same work, Bieberbach
stated in a footnote that in the general case there must hold $\abs{a_j} \le j$. This innocently-looking statement became 
known as the Bieberbach conjecture, which, since then, has been the engine of the development of geometric complex
analysis for 105 years. In 1984, the Bieberbach conjecture was proved by Louis de Branges, who also showed that the
extremal function, up to rotation, is the Koebe function
$$
K(z)=\frac{z}{(1-z)^2}=z+2z^2+3z^3+\ldots\,.  
$$

Traditionally, the class of univalent in $\bD$ functions of the form \eqref{F} is denoted by the symbol $\mathcal S$ (from the 
German word Schlicht). For each function $F\in\mathcal S$, the function $H(z)=\sqrt[T]{F(z^T)}\in\mathcal S$ ($z\in\bD$,
$T=1,2,\ldots$) and maps the unit disk to a domain with $T$-fold symmetry ($T$-fold symmetric function). 
G. Szeg\H{o} suggested that for the coefficients of the $T$-symmetric univalent function
$$
F(z)=z+\sum_{j=2}^{\infty}a_j z^{1+(j-1)T},
$$
$\max{\abs{a_j}}\le O\left( (1+(j-1)T)^{-1+2/T} \right)$ \cite{Bas,Lew}. However, J. Littlewood showed \cite{Litt} that
this hypothesis is false for sufficiently large $T$. The results of N. Makarov \cite{Mak} imply that the growth rate of 
coefficients of $T$-symmetric univalent functions equals $2/T-1$ for $T\le2/B(1)$ and $B(1)-1$ for $T\ge2/B(1)$, where
$B(t)$ is the universal integral means spectrum for the class of univalent functions. This theorem, together with the 
important paper by L. Carleson and P. Jones \cite{CJ}, suggest that G. Szeg\H{o}'s conjecture holds for $T\le8$ and
fails for $T\ge9$. D. Beliaev and S. Smirnov \cite{BS2005} proved that the conjecture is indeed wrong for $T\ge9$. The
general problem of finding the correct estimates is still wide open even for bounded functions, see, e.g., \cite{BS2010}. 
Thus, the transfer of the known extremal properties of general functions, belonging to the class $\mathcal S$, to $T$-symmetric
functions of the class $S$ is not, generally, a trivial task.

Even more significant difficulties arise when trying to establish the well-known properties of univalent polynomials
for $T$-symmetric univalent polynomials. 

One of the most famous families of univalent polynomials solving various extremal problems is the T. Suffridge 
polynomials \cite{Suf}:
$$
S_{k,N}(z)=z+\sum_{j=2}^{N} \sigma_{j,k}z^{j},
$$
$$
\sigma_{j,k}=\left(1-\frac{j-1}N\right)\frac{\sin{\frac{\pi kj}{N+1}}}{\sin{\frac{\pi k}{N+1}}},\;
k=1,2,\ldots,N.
$$
Note that Suffridge polynomials are the matter of greater part of the section on univalent polynomials in the 
well-known book on univalent functions \cite{Dur}. 

It is easy to see that the coefficient $\sigma_{N,k}=\frac{(-1)^k}N$ possesses the extremal property, since 
the absolute value of the leading coefficient of a univalent polynomial, whose first coefficient is equal to one, can not
exceed the value of $1/N$. 

T. Suffridge proved that all the polynomials $S_{k,N}(z)$ are univalent in $\bD$. Moreover, the zeros of the derivatives
of polynomials $S_{k,N}(z)$ lie on the unit circle, and the image of the unit circle contains $N-1$ cusps \cite{Cor}. Thus,
the Suffridge polynomials are quasi-extremal polynomials in the sense of Ruscheweyh \cite{GRS}. 

Also, T. Suffridge showed that the polynomial $S(z):=S_{1,N}(z)$ is extremal, in the sense that
it maximizes the absolute value of each coefficient of any univalent polynomial of order $N$ with the leading coefficient
$1/N$.

In \cite{Br}, one more extremal property of the polynomial $S(z)$ was established, namely there was solved the
extremal problem of estimating the maximum value of the modulus of univalent polynomials with real coefficients 
of the form $P(z)=z+\sum\limits_{j=2}^N a_j z^j$ in the disk $\overline{\bD}=\{z\in\mathbb{C}:\abs{z} \le1\}$:
\begin{equation}\label{extr}
\max_{P\in\mathcal S}\left\{ \max_{z\in\overline{\bD}} \{\abs{P(z)}\}\right\}=S(1)=\frac14 \left(1+\frac1N\right)
\csc^2{\frac{\pi}{2N+2}}.
\end{equation}
Note that formula \eqref{extr} is (46) in \cite{Br}, and that the uniqueness of the corresponding extremal polynomial is not discussed.

The following extremal property of the Suffridge polynomial is related to the problem of covering intervals and is 
achieved on the class of the univalent in $\bD$ polynomials with real coefficients $P(z)$ with the normalization $P(1)=1$.
Namely,
\begin{equation}\label{min}
\min_{P}\{\abs{P(-1)}\}=\frac{\abs{S(-1)}}{S(1)}=\tan^2{\frac{\pi}{2(N+1)}}.
\end{equation} 
The polynomial $\frac1{S(1)}S(z)$ is again the only extremal polynomial in this problem \cite{Dim}. 

Note that the Suffridge polynomials are used to approximate the Koebe function \cite{Dim}, some unexpected 
applications of extremal polynomials to the solution of control problems in nonlinear discrete systems are also 
found \cite{DHKKS}.

Obviously, the $T$-symmetric function $\sqrt[T]{S(z^T)}$ is not a polynomial. What polynomial with circlurar 
symmetry will have the properties analogous to those of the Suffridge polynomial? What values will the corresponding
extremal quantities take in the considered above extremal problems? This paper is devoted to a discussion of these issues.
Let us note only that even for $T=2$, that is, in the case of odd polynomials, these problems are not easy.

The paper is organized as follows. In Section 2, polynomials with fold symmetry are presented as candidates for 
an analogue of the Suffridge polynomials. A number of hypotheses for the extremal properties of these polynomials,
which are inherent in Suffridge polynomials, are put forward. The third section gives a description of the domain of
univalence for trinomials with fold symmetry in the coefficient plane, using five parametric equations of the boundary. 
There are formulated constrained extremum problems for some functions of two variables, which are test ones for
checking the stated hypotheses. The fourth section is auxiliary, two technical Lemmas are proved there. Section 5 contains
the main Theorem of the paper, from which it follows that all the formulated hypotheses are true for trinomials with
fold symmetry. The last section is devoted to a short discussion of the obtained results.

\section{Problem statement}

In \cite{DST,DSST}, there was introduced a new class of $T$-symmetric polynomials of degree $N=1+(n-1)T$
\begin{equation}\label{Ts}
S^{(T)}(z)=z+\sum_{j=2}^{n} \left(1-\frac{(j-1)T}{1+(n-1)T}\right)
\prod_{k=1}^{j-1}\frac{\sin{\frac{\pi(2+T(k-1))}{2+T(n-1)}}}{\sin{\frac{\pi Tk}{2+T(n-1)}}}
z^{T(j-1)+1}.
\end{equation}
It is easy to check that $S^{(1)}(z)=S(z)$, $S^{(2)}(z)=-iS_{n,N}(iz)$, where $N=2n-1$, $i^2=-1$. Hence, 
the polynomial $S^{(2)}(z)$ is univalent. Note that the leading coefficient of the polynomial $S^{(T)}(z)$ has 
the maximum possible value for univalent polynomials of degree $N=1+(n-1)T$ with the first coefficient equal to one:
$\frac1{1+(n-1)T}$.

The works \cite{DSST} \cite{DST} presented the results of some numerical experiments, which, together with the  
established results, made it possible to propose a number of hypotheses. Let us consider them.

Denote the class of univalent $T$-symmetric polynomials with real coefficients of degree $N=1+(n-1)T$ and with the
first coefficient equal to one by $\mathcal S_N^T$. Hypotheses:

{\it a)} all the polynomials $S^{(T)}(z)$ are univalent in $\bD$;

{\it b)} all the polynomials $S^{(T)}(z)$ are quasi-extremal in the sense of Ruscheweyh;

{\it c)} all the polynomials $S^{(T)}(z)$ are extremal, in the sense that they maximize the absolute value of each
coefficient of any polynomial from $\mathcal S_N^T$ with the leading coefficient $1/N$;

{\it d)} $\max\limits_{P\in\mathcal S_N^T}\left\{\max\limits_{z\in\overline{\bD}} \{\abs{P(z)}\}\right\}=S^{(T)}(1)$;

{\it e)} $\min\limits_{P\in\mathcal S_N^T}\left\{ \frac1{P(1)}\abs{P\left(e^{i\frac{\pi}T}\right)} \right\}=
\frac1{S^{(T)}(1)}\abs{S^{(T)}\left(e^{i\frac{\pi}T}\right)}$.

Problem: check the truth of the formulated hypotheses for all $T=1,2,\ldots$ and $n=3$. Note that in contrast with the 
cases $n=1,2$, the problem of determining the extremal properties of polynomials from different classes even for small
$n\ge3$ is usually far from trivial \cite{Ign}. Let us only note that the problem {\it e)} for $T=2$ is solved in \cite{DHKKS},
where it is established that
$$
\min_{P\in\mathcal S_N^2}\left\{ \frac1{P(1)}\abs{P\left(e^{i\frac{\pi}2}\right)}\right\}=
\frac1{S^{(2)}(1)}\abs{S^{(2)}\left(e^{i\frac{\pi}2}\right)} =
\abs{\frac{S_{n,N}(-1)}{S_{n,N}(i)}} =\frac1n,
$$
and the extremal polynomial is unique.

\section{Domain of univalence of $T$-symmetric trinomials in the coefficient space}

In \cite{Sch}, there was considered the problem of constructing the domain of univalence for the trinomials
$$
F(z)=z+az^k+bz^m
$$
with complex coefficients. Let us present the result of this work for the case of real coefficients. So, let
$$
U_{k,m}=\{(a,b)\in\mathbb{R}^2: z+az^k+bz^m \textit{ is univalent in }\bD\}.
$$ 
Let $\frac{k-1}{m-k}=\frac{p}{q}$ be the fraction after reduction. Define the following five curves in the plane:
\begin{gather*}
\Gamma_1=\left\{(x,y):\,x=\frac{-\sin{(m-1)t}}{k\sin{(m-k)t}},\,
y=\frac{\sin{(k-1)t}}{m\sin{(m-k)t}},\,t\in\mathbb{R}\right\},\\
\Gamma_2^s=\{(x,y):\, kx+(-1)^{qs}my=(-1)^{ps+1}\},\;s=0,1,\\
\Gamma_3^s=\left\{(x,y):\, x=(-1)^{ps+1}\frac{\cos{t}}{\cos{kt}}
\frac{\tan{mt}-m\tan{t}}{k\tan{mt}-m\tan{kt}},\right.\\
\left. y=(-1)^{(p+q)s}\frac{\cos{t}}{\cos{mt}}\frac{\tan{kt}-k\tan{t}}{k\tan{mt}-m\tan{kt}},
\,t\in\mathbb{R}\right\},\;s=0,1.
\end{gather*}

It is noted further that these curves define the area $U_{k,m}$ in such a way that the boundary of this area is
contained in the union of these five curves. The set $U_{k,m}$ is a component bounded by these curves, which 
contains zero.

A thorough analysis of the equations for the boundaries of the area $U_{k,m}$ shows that these equations can be
simplified, and we also can identify the exact intervals of variation for the parameters in the parametric equations for 
setting the boundaries. Let us present these equations for the case $k=1+T$, $m=1+2T$.

We have $\frac{k-1}{m-k}=1$, $p=1$, and $q=1$. Hence,
\begin{gather*}
\frac{\cos{t}}{\cos{(1+T)t}} \frac{\tan{(1+2T)t}-(1+2T)\tan{t}}{(1+T)\tan{(1+2T)t}-(1+2T)\tan{(1+T)t}}=\\
=\frac{2T\sin{(2+2T)t}-(2+2T)\sin{2Tt}}{T\sin{(2+3T)t}-(2+3T)\sin{Tt}},\\
\frac{\cos{t}}{\cos{(1+2T)t}} \frac{\tan{(1+T)t}-(1+T)\tan{t}}{(1+T)\tan{(1+2T)t}-(1+2T)\tan{(1+T)t}}=\\
=\frac{T\sin{(2+T)t}-(2+T)\sin{Tt}}{T\sin{(2+3T)t}-(2+3T)\sin{Tt}}.
\end{gather*} 
Then the domain $U_T$ of univalence of the trinomial $F(z)=z+az^{1+T}+bz^{1+2T}$ is bounded by the curves:
\begin{gather*}
\Gamma_1=\left\{(x,y):\, x=t,\,y=\frac1{1+2T},\right.\\
\left.t\in\left[ -2\frac{1+T}{1+2T}\sin{\frac{\pi}{2+2T}}, 2\frac{1+T}{1+2T}\sin{\frac{\pi}{2+2T}} \right]\right\}, \\
\Gamma_2^{+}=\left\{(x,y):\,x=t,\,y=\frac{(1+T)t-1}{1+2T},\,t\in\left[0,\frac4{2+3T}\right]\right\},\\
\Gamma_2^{-}=\left\{(x,y):\,x=-t,\,y=\frac{(1+T)t-1}{1+2T},\,t\in\left[0,\frac4{2+3T}\right]\right\},\\
\Gamma_3^{+}=\left\{(x,y):\,x=A(t),\,y=B(t),\,t\in\left[0,\frac{\pi}{2+2T}\right]\right\},\\
\Gamma_3^{-}=\left\{(x,y):\,x=-A(t),\,y=B(t),\,t\in\left[0,\frac{\pi}{2+2T}\right]\right\},
\end{gather*}
where 
\begin{gather*}
A(t)=\frac{2T\sin{(2+2T)t}-(2+2T)\sin{2Tt}}{T\sin{(2+3T)t-(2+3T)\sin{Tt}}},\\
B(t)=\frac{T\sin{(2+T)t}-(2+T)\sin{Tt}}{T\sin{(2+3T)t-(2+3T)\sin{Tt}}}.
\end{gather*}

The boundaries of the intervals of variation for the parameters in defining the curves, which determine the boundary of
the domain $U_T$, are computed as the roots of the corresponding equations. Note that the domain $U_1$ is defined 
in \cite{Bran,Cow}, and optimization for $U_1$ is used in \cite{Dmitrishin2019UnivalentPA}. Therefore, the current
work is a generalization of \cite{Dmitrishin2019UnivalentPA}.

To demonstrate how the domain $U_T$ contracts with the growth of $T$, let us picture this area for different $T$ 
(Fig.~\ref{f1}). In Fig.~\ref{f2}, the boundaries $\Gamma_1$, $\Gamma_2^{+}$, $\Gamma_2^{-}$,
$\Gamma_3^{+}$, $\Gamma_3^{-}$ are shown for $T = 4$. The trinomial from family \eqref{Ts},
\begin{equation}\label{tri}
S^{(T)}(z)=z+a^{(0)}z^{1+T}+b^{(0)}z^{1+2T}
\end{equation}
where $a^{(0)}=2\frac{1+T}{1+2T}\sin{\frac{\pi}{2+2T}}$ and $b^{(0)}=\frac1{1+2T}$, has coefficients given by the point $C$ where $\Gamma_1$ and $\Gamma_3^+$ meet.

\begin{figure}[h!]
\includegraphics[scale=0.5]{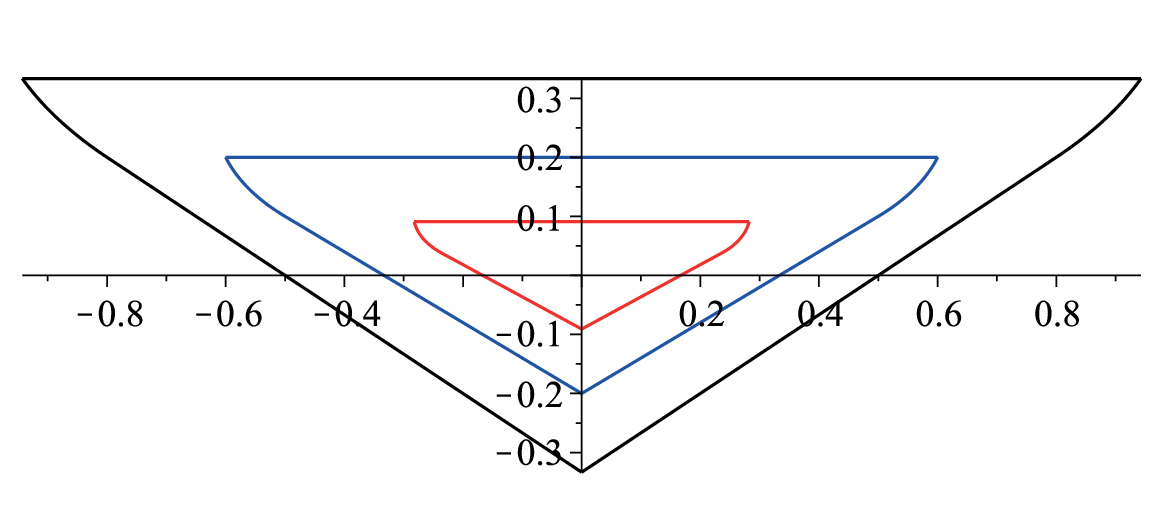}
\caption{Domain $U_T$ of univalence of the trinomial $F(z)=z+az^{1+T}+bz^{1+2T}$ in the parameter plane $(a,b)$
for $T=1$ (the boundary is shown in black), $T=2$ (the boundary is shown in blue),
$T=5$ (the boundary is shown in red)} \label{f1}
\end{figure} 

\begin{figure}[h!]
\includegraphics[scale=0.5]{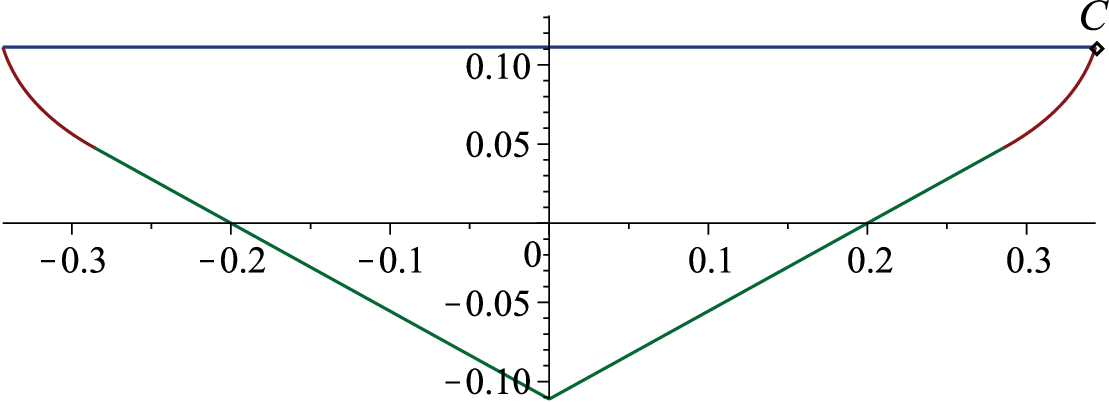}
\caption{Curves $\Gamma_1$ (blue), $\Gamma_2^{+}$, $\Gamma_2^{-}$ (green), 
$\Gamma_3^{+}$, $\Gamma_3^{-}$ (red) and the point $C(a^{(0)}, b^{(0)})$ for $T=4$} \label{f2}
\end{figure} 

Note some properties of the domain $U_T$. This area, considered in the plane $(a,b)$, has axial symmetry about the
line $Ob$; the line $\Gamma_1$ defines polynomials with zeros of the derivative lying on the unit circle (quasi-extremal
polynomials in the sense of Ruscheweyh). Thus, hypotheses {\it a)} and {\it b)} for trinomial~\eqref{tri} are confirmed. 

To confirm the truth of hypotheses {\it c)}, {\it d)}, {\it e)}, we need to determine the extreme values of the functions
$$
L_1(x,y)=x,\; L_2(x,y)=1+x+y,\; L_3(x,y)=\frac{1-x+y}{1+x+y}
$$
in the domain $U_T$. 

It is easy to show that these functions do not have stationary points in $U_T$ (points at which both partial derivatives disappear) therefore we can restrict our attention to the boundary of the region. 
 It will be shown that the functions $L_1(x,y)$ and $L_2(x,y)$ increase on the curve
 $\Gamma_3^{+}$, and the function $L_3(x,y)$ decreases on this curve. This will imply that
\begin{gather*}
\max_{(x,y)\in U_T}\{L_1(x,y)\}=L_1(a^{(0)},b^{(0)})=a^{(0)}=2\frac{1+T}{1+2T}\sin{\frac{\pi}{2+2T}},\\
\max_{(x,y)\in U_T}\{L_2(x,y)\}=L_2(a^{(0)},b^{(0)})=1+a^{(0)}+b^{(0)}=\\
=1 + \frac1{1+2T}\left( 1+2(1+T)\sin{\frac{\pi}{2+2T}} \right),\\
\min_{(x,y)\in U_T}\{L_3(x,y)\}=L_3(a^{(0)},b^{(0)})=\frac{1-a^{(0)}+b^{(0)}}{1+a^{(0)}+b^{(0)}}=
\left(\frac{1-\sin{\frac{\pi}{2+2T}}}{\cos{\frac{\pi}{2+2T}}}\right)^2.
\end{gather*}

Though the statement of the extremal problems is rather simple, we will need to overcome certain technical difficulties
to solve them. 

\section{Auxiliary results}

\begin{lemma}\label{l1}
Let 
$$
H(x,k)=-(1-x)^2 3^{2k}+(1+x)(3-2x)^{2k}-(1+x)(3-x)+(3-x)(1-2x)^{2k}.
$$
Then, for $k\ge3$ and $x\in(0,1/2]$
\begin{equation}\label{a)} 
H(x,k)<0,
\end{equation}
and
\begin{equation}\label{b)} 
\abs{H(x,k)} >\frac{\pi^2}{4(2k+1)(2k+2)}\abs{H(x,k+1)}.
\end{equation}
\end{lemma}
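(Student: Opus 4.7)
The plan is to exploit a two-step recurrence for $H$ built from the auxiliary function
\[
R(x,k) := 2 - x(3-2x)^{2k} - (2-x)(1-2x)^{2k}.
\]
Using the identities $(3-2x)^2 - 9 = -4x(3-x)$ and $(1-2x)^2 - 9 = -4(2-x)(1+x)$, a routine rearrangement yields the key relation
\[
H(x,k+1) = 9\,H(x,k) + 4(1+x)(3-x)\,R(x,k),
\]
which will drive both halves of the lemma.

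For (a), I first establish $R(x,k) < 0$ for $k \ge 2$, $x \in (0,1/2]$. Direct expansion and factoring give $R(x,2) = -32\,x(1-x)^2(2-x)$, clearly negative on $(0,1/2]$, and the telescoping identity
\[
R(x,k+1) - R(x,k) = -4x(1-x)(2-x)\bigl[(3-2x)^{2k} - (1-2x)^{2k}\bigr] \le 0
\]
shows that $R(\cdot,k)$ is non-increasing in $k$, so $R(x,k) \le R(x,2) < 0$ for all $k \ge 2$. Next, a direct polynomial expansion verifies the identity $H(x,2) \equiv 0$; substituting this into the recurrence at $k = 2$ produces the closed form
\[
H(x,3) = 4(1+x)(3-x)\,R(x,2) = -128\,x(1+x)(3-x)(1-x)^2(2-x) < 0
\]
on $(0,1/2]$, which serves as the base case. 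Induction on $k$ via the recurrence then finishes (a), since both summands on the right are negative.

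For (b), the recurrence combined with the sign information gives $|H(x,k+1)| = 9|H(x,k)| + 4(1+x)(3-x)|R(x,k)|$, so (b) is equivalent to
\[
4(1+x)(3-x)|R(x,k)| < \bigl[(2k+1)(2k+2) - 9\bigr]|H(x,k)|.
\]
Iterating the $H$-recurrence starting from $H(x,2) = 0$ gives $H(x,k) = 4(1+x)(3-x)\sum_{j=2}^{k-1} 9^{k-1-j} R(x,j)$. Factoring out $-32\,x(1-x)^2(2-x)$ from each $R(x,j) = -32\,x(1-x)^2(2-x)\,P_j(x)$ (where $P_2 \equiv 1$ and $P_{j+1} - P_j = [(3-2x)^{2j} - (1-2x)^{2j}]/[8(1-x)] \ge 0$ from the $R$-recurrence) reduces the desired inequality to
\[
\frac{P_k(x)}{Q_k(x)} < (2k+1)(2k+2) - 9, \qquad Q_k := \sum_{j=2}^{k-1} 9^{k-1-j}\,P_j.
\]
Since $P_j \ge 1$ for all $j \ge 2$, we have $Q_k \ge 9^{k-3}$, and using $(3-2x)^{2j} - (1-2x)^{2j} \le 9^j$ together with $1/[8(1-x)] \le 1/4$ on $[0,1/2]$,
\[
P_k \le 1 + \tfrac{1}{4}\sum_{j=2}^{k-1} 9^j = \frac{9^k - 49}{32} < \frac{9^k}{32}.
\]
Therefore $P_k/Q_k < 9^3/32 = 729/32 < 47 \le (2k+1)(2k+2) - 9$ for every $k \ge 3$. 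The main obstacle is the initial polynomial bookkeeping---in particular, verifying the nonobvious identity $H(x,2) \equiv 0$ and securing the clean factorization $R(x,2) = -32\,x(1-x)^2(2-x)$---since these reveal the hidden algebraic structure that makes the recurrence sharp enough to carry both inequalities in one stroke.
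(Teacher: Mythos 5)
Your proof is correct, and it takes a genuinely different route from the paper's. You build everything on the exact linear recurrence $H(x,k+1)=9H(x,k)+4(1+x)(3-x)R(x,k)$ with $R(x,k)=2-x(3-2x)^{2k}-(2-x)(1-2x)^{2k}$; I checked the two quadratic identities, the factorization $R(x,2)=-32x(1-x)^2(2-x)$, the telescoping $R(x,k+1)-R(x,k)=-4x(1-x)(2-x)\bigl[(3-2x)^{2k}-(1-2x)^{2k}\bigr]$, the identity $H(x,2)\equiv 0$ (which the paper itself asserts, but only inside the proof of its Lemma 2), and the final numeric bound $P_k/Q_k<729/32<47\le(2k+1)(2k+2)-9$, and all are valid. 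The paper argues differently: it splits $H$ into the two differences $-(1-x)^2 3^{2k}+(1+x)(3-2x)^{2k}$ and $-(1+x)(3-x)+(3-x)(1-2x)^{2k}$, shows each is negative by monotonicity in $k$ down to $k=3$ followed by locating the real roots of an explicit degree-eight polynomial, and for (b) it proves the stronger estimate $|H(x,k)|>\tfrac{1}{3k^2}|H(x,k+1)|$ by the same term-by-term splitting, again invoking numerically computed roots ($0.91\ldots$, $0.87\ldots$, etc.) of auxiliary polynomials. Your approach buys a fully algebraic, induction-friendly argument with exact factorizations and no numerical root-finding, and it delivers both (a) and (b) from a single structural identity; the paper's approach is shorter to state and yields the sharper constant $1/(3k^2)$ in (b) (which is what its Lemma 2 actually leans on), at the cost of resting on approximate root locations. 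Both establish the lemma as stated.
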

\begin{proof}
Let us start with the proof of \eqref{a)}.
It is obvious that $(1+x)(3-x)>(3-x)(1-2x)^{2k},$ i.e. $-(1+x)(3-x)+(3-x)(1-2x)^{2k}<0.$ Further,
$$
(1-x)^2 3^{2k}-(1+x)(3-2x)^{2k}=3^{2k}\left((1-x)^2-(1+x)(1-2x/3)^{2k}\right).
$$
For $k\ge3$, $x\in(0,1/2]$, the following inequality holds:
$$
(1-x)^2-(1+x)(1-2x/3)^{2k} \geq (1-x)^2-(1+x)(1-2x/3)^6.
$$
It suffices to show that $(1 - x)^2 - (1 + x)(1 - 2x/3)^6 > 0$. Change the inequality to the following form:
\begin{displaymath}
\frac{(1 - x)^2}{1 + x} > \left(1 - \frac{2x}{3}\right)^6.
\end{displaymath} 
For $x \in (0,\frac{1}{2}]$, note that $\frac{1}{1 + x} > 1 - x$. This implies $\frac{(1 - x)^2}{1 + x} > (1 - x)^3$. Hence, we will show that
\begin{displaymath}
(1 - x)^3 > \left(1 - \frac{2x}{3}\right)^6.
\end{displaymath}
For $x \in (0,\frac{1}{2}]$, we have
\begin{displaymath}
(1 - \frac{2x}{3})^2 = 1 - \frac{4x}{3} + \frac{4x^2}{9} = 1 - x - \frac{x(3 - 4x)}{9} < 1- x,
\end{displaymath}
which verifies $(1 - x)^3 > (1 - \frac{2x}{3})^6$. Thus, \eqref{a)} is proven.\\

Now, we proceed to the proof of inequality \eqref{b)}. Since $4(2k + 1)(2k + 2) \geq 224$ when $k \geq 3$, it suffices to show
$$
\abs{H(x,k)}>\frac{\pi^2}{224}\abs{H(x,k+1)}.
$$
Then, we must demonstrate that
\begin{eqnarray*}
\lefteqn{(1 - x)^23^{2k} - (1 + x)(3 - 2x)^2k + (1 + x)(3 - x) - (3 - x)(1 - 2x)^{2k}} \\
&>& \frac{\pi^2}{224}\Big(9(1 - x)^23^{2k} - (3 - 2x)^2(1 + x)(3 - 2x)^{2k} \\
& & + (1 + x)(3 - x) - (1 - 2x)^2(3 - x)(1 - 2x)^{2k}\Big).
\end{eqnarray*}
First, we will show that 
$$
(1+x)(3-x)-(3-x)(1-2x)^{2k}>\frac{\pi^2}{224}\left((1+x)(3-x)-(3-x)(1-2x)^{2k}(1-2x)^2\right).
$$
Simplify the inequality to 
$$
1+x>(1-2x)^{2k}\left(\frac{1 - \frac{\pi^2}{224}(1-2x)^2}{1 - \frac{\pi^2}{224}}\right).
$$
We can further simplify this expression by noting
\begin{displaymath}
\frac{1 - \frac{\pi^2}{224}(1-2x)^2}{1 - \frac{\pi^2}{224}} = 1 + \frac{4\pi^2x(1 - x)}{224 - \pi^2}.
\end{displaymath}
Since $x(1 - x) < x$ and $(1 - 2x)^{2k}$ is decreasing for $x \in (0,\frac{1}{2}]$, we have
\begin{displaymath}
1+x>(1-2x)^{2k}\left(1 + \frac{4\pi^2x}{224 - \pi^2}\right) > (1 - 2x)^{2k}\left(1 + \frac{4\pi^2x(1 - x)}{224 - \pi^2}\right).
\end{displaymath}
This proves the inequality.

Now, let us show that
$$
(1-x)^23^{2k}-(1+x)(3-2x)^{2k}>\frac{\pi^2}{224}\left(9(1-x)^23^{2k}-(3 - 2x)^2(1+x)(3-2x)^{2k}\right).
$$
To do that let us write the inequality in the form
$$
(1 - x)^2 > (1 + x)\left(1 - \frac{2}{3}x\right)^{2k}\left(\frac{224 - \pi^2(3 - 2x)^2}{224 - 9\pi^2}\right).
$$
Since $(1 - 2x/3)^{2k} \leq (1 - 2x/3)^6$ for $k\ge3$ and $x\in(0,1/2]$ it suffices to show
$$
(1 - x)^2 > (1 + x)\left(1 - \frac{2}{3}x\right)^{6}\left(\frac{224 - \pi^2(3 - 2x)^2}{224 - 9\pi^2}\right).
$$
For $x \in (0,\frac{1}{2}]$, we have
\begin{displaymath}
\frac{224 - \pi^2(3 - 2x)^2}{224 - 9\pi^2} = 1 + \frac{4\pi^2x(3 - x)}{224 - 9\pi^2} < 1 + \frac{12\pi^2x}{224 - 9\pi^2} < 1 + x.
\end{displaymath}
Hence, the problem reduces to demonstrating that
\begin{displaymath}
(1 - x)^2 > (1 + x)^2\left(1 - \frac{2}{3}x\right)^6,
\end{displaymath}
or equivalently
\begin{displaymath}
(1 - x)^2 - (1 + x)^2\left(1 - \frac{2}{x}x\right)^6 > 0.
\end{displaymath}
With a little algebra we obtain
\begin{eqnarray*}
\lefteqn{(1 - x)^2 - (1 + x)^2\left(1 - \frac{2}{x}x\right)^6} \\
&=& \frac{4x^2}{729}(4x^2 - 14x + 9)(27 - 27x - (4x^2 - 14x + 9)) \\
&>& \frac{4x^2}{729}(9 - 14x)(27 - 27x - (4x^2 - 14x + 9)) \\
&=& \frac{4x^2}{729}(9 - 14x)(27(1 - x) - (-x + (1 - x)(9 - 4x)) \\
&=& \frac{4x^2}{729}(9 - 14x)(x + (1 - x)(18 + 4x)) \\
&>& 0,
\end{eqnarray*}
for $x \in (0,\frac{1}{2}]$. Thus, the inequality \eqref{b)} is valid.
 
The lemma is proved.
\end{proof}

\begin{lemma}\label{l2}
Let 
\begin{gather*}
G(\tau,\alpha)=(1+\alpha)\cos{(3-2\alpha)\tau}+(3-\alpha)\cos{(1-2\alpha)\tau}-\\
-(1-\alpha)^2\cos{3\tau}-(1+\alpha)(3-\alpha)\cos{\tau}.
\end{gather*}
Then $G(\tau,\alpha) \geq 0$ when $\tau \in (0,\pi/2)$, $\alpha\in(0,1/2]$.
\end{lemma}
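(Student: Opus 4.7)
\medskip\noindent\textbf{Proof proposal.} The plan is to expand $G$ in a Taylor series at $\tau=0$ and to exploit the fact that the Taylor coefficients are---up to a factorial and sign---exactly the quantities $H(\alpha,k)$ of Lemma~\ref{l1}. Inserting $\cos(c\tau)=\sum_{k\ge 0}(-1)^k(c\tau)^{2k}/(2k)!$ into each of the four cosines in $G(\tau,\alpha)$ and collecting coefficients gives
\[
G(\tau,\alpha)=\sum_{k=0}^{\infty}\frac{(-1)^k H(\alpha,k)}{(2k)!}\,\tau^{2k}.
\]
A routine algebraic check---three short polynomial identities in $\alpha$---shows that $H(\alpha,0)$, $H(\alpha,1)$, and $H(\alpha,2)$ all vanish identically, so the Taylor series of $G$ in fact begins at the $\tau^6$ term.

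Combined with the first inequality of Lemma~\ref{l1}, which says $H(\alpha,k)<0$ for $k\ge 3$ and $\alpha\in(0,1/2]$, one obtains the alternating representation
\[
G(\tau,\alpha)=\sum_{k=3}^{\infty}(-1)^{k+1}\frac{|H(\alpha,k)|}{(2k)!}\,\tau^{2k},
\]
whose leading term $|H(\alpha,3)|\tau^6/720$ is strictly positive. By the alternating-series test it suffices to verify that $c_k:=|H(\alpha,k)|\tau^{2k}/(2k)!$ is strictly decreasing in $k$ (decay to zero is immediate from the factorial in the denominator). The second inequality of Lemma~\ref{l1} already says that $b_k:=|H(\alpha,k)|/(2k)!$ is decreasing, so $c_k=b_k\tau^{2k}$ is decreasing whenever $\tau\le 1$; this settles the range $\tau\in(0,1]$ at once.

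The chief obstacle is the remaining range $\tau\in(1,\pi/2)$, where monotonicity of $c_k$ amounts to the ratio bound $|H(\alpha,k+1)|/|H(\alpha,k)|<(2k+1)(2k+2)/\tau^2$, which is tighter than what Lemma~\ref{l1} directly delivers. For large $k$ this is easy because $|H(\alpha,k+1)|/|H(\alpha,k)|\to 9$ while the right-hand side grows like $k^2$, and the sharper form $|H(\alpha,k+1)|<3k^2|H(\alpha,k)|$ proved inside Lemma~\ref{l1} makes the threshold explicit. The borderline case $k=3$ in the sub-regime where $\tau$ approaches $\pi/2$ is the tightest point: here one needs $|H(\alpha,4)|/|H(\alpha,3)|<224/\pi^2\approx 22.7$ uniformly on $\alpha\in(0,1/2]$, which lies just below the $27$ supplied by the $3k^2$ refinement. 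I would close this last gap by a direct polynomial analysis of $|H(\alpha,3)|$ and $|H(\alpha,4)|$, writing the obstruction as an explicit polynomial in $\alpha$, factoring it, and locating its real roots on $(0,1/2]$---in the same spirit as the root-counting arguments used inside the proof of Lemma~\ref{l1}. Once the ratio inequality is verified for every $k\ge 3$, the alternating-series test yields $G(\tau,\alpha)>0$ throughout.
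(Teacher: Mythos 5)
Your plan is the paper's plan: expand $G$ as $\sum_{k\ge0}(-1)^kH(\alpha,k)\tau^{2k}/(2k)!$, verify $H(\alpha,0)=H(\alpha,1)=H(\alpha,2)\equiv0$, invoke Lemma~\ref{l1} to get an alternating series whose first term $|H(\alpha,3)|\tau^6/6!$ is positive, and lower-bound $G$ by a two-term truncation. You have also correctly located the bottleneck: for $\tau\in(1,\pi/2)$ inequality \eqref{b)} only yields $c_{k+1}<\tau^2c_k$ for the terms $c_k=|H(\alpha,k)|\tau^{2k}/(2k)!$, which is not monotonicity, and the decisive requirement at $k=3$ is $|H(\alpha,4)|/|H(\alpha,3)|<56/\tau^2$, i.e.\ below $224/\pi^2\approx22.7$ when $\tau$ is near $\pi/2$.

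The problem is that the proposal stops exactly where the work begins. The paper closes the $k=3$ step with the explicit identity $|H(\alpha,4)|/|H(\alpha,3)|=4(5-4\alpha+2\alpha^2)$, which on $(0,1/2]$ lies in $[14,20)$; the two-term bound then reads $G>\mathrm{const}\cdot\tau^6\bigl(\tfrac{14}{5-4\alpha+2\alpha^2}-\tau^2\bigr)$ with $\tfrac{14}{5}>(\pi/2)^2$, and the lemma follows. Your ``direct polynomial analysis'' is a promise to do precisely this computation, which is the entire content of the step, not a routine footnote. Worse, your treatment of the remaining terms is built on a tool that cannot work: the refinement $|H(\alpha,k+1)|<3k^2|H(\alpha,k)|$ gives $c_{k+1}/c_k<3k^2\tau^2/\bigl((2k+1)(2k+2)\bigr)$, and since $3k^2(\pi/2)^2>(2k+1)(2k+2)$ for \emph{every} $k\ge3$ (the ratio tends to $3\pi^2/16>1$), this bound never certifies that the terms decrease when $\tau$ is near $\pi/2$ --- the difficulty is not confined to $k=3$ as you suggest. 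Controlling the tail requires the genuinely smaller true ratio (which tends to $9$), and no uniform version of that estimate appears in your proposal; to be fair, the paper's own Leibniz step also leans on \eqref{b)} here and tacitly needs $c_{2j+1}\ge c_{2j+2}$ for $j\ge2$, so this point deserves an explicit argument in either write-up. In short: right strategy, correct diagnosis of where the proof is tight, but the two quantitative facts that actually prove the lemma are both left unproven.
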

\begin{proof}
Using the cosine series expansion formula, we can write
\begin{equation}\label{G}
G(\tau,\alpha)=\sum_{k=0}^{\infty} (-1)^k \frac1{(2k)!}H(\alpha,k)\tau^{2k}.
\end{equation}
Note that $H(\alpha,0)=H(\alpha,1)=H(\alpha,2)\equiv0$. By \eqref{a)} the series~\eqref{G} is an alternating
series with $H(\alpha,3)<0$, and by \eqref{b)} the series is monotonically decreasing in absolute value for $k \geq 3$ and $\tau \in (0,\frac{\pi}{2}]$. Using Leibniz's Theorem on estimating a function with the partial sums of its alternating 
series expansion, we obtain the estimate
$$
G(\tau,\alpha)>\frac{\tau^6}{8!}\left(56 \cdot \abs{H(\alpha,3)}-\abs{H(\alpha,4)}\tau^2\right)=
\frac{\abs{H(\alpha,4)}\tau^6}{8!} \left(\frac{14}{5-4\alpha+2\alpha^2}-\tau^2\right).
$$
When $\alpha\in(0,1/2]$, $\displaystyle\frac{14}{5-4\alpha+2\alpha^2}>\frac{14}5$. Then
$$
G(\tau,\alpha)>\frac{\abs{H(\alpha,4)}\tau^6}{8!} \left(\frac{14}{5}-\left(\frac{\pi}2\right)^2\right)>0
$$
for $\tau\in(0,\pi/2)$. The lemma is proved.

\end{proof}
\section{Main results}

Let us find the extreme values of the functions 
$$
L_1(x,y)=x,\; L_2(x,y)=1+x+y,\; L_3(x,y)=\frac{1-x+y}{1+x+y}
$$ 
in the domain $U_T$.

\begin{theorem}\label{t1}
The functions $L_1(x,y)$ and $L_2(x,y)$ attain the maximum over the region $U_T$ at the upper right corner, while the function $ L_3(x,y)$ attains its minimum there. 
\end{theorem}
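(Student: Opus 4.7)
The plan is to reduce Theorem~\ref{t1} to one-variable monotonicity statements on the pieces of $\partial U_T$, and then to invoke Lemma~\ref{l2} on the single piece that is genuinely trigonometric. Since, as already noted, none of $L_1,L_2,L_3$ has an interior stationary point in $U_T$, each extremum must be attained on $\partial U_T$. The domain is symmetric about the $b$-axis; moreover $L_1$ is odd in $x$, $L_2$ strictly increases when $x$ is replaced by $|x|$, and
\[
\partial_x L_3=-\frac{2(1+y)}{(1+x+y)^2}<0
\]
throughout $U_T$ (since $y\ge -1/(1+2T)>-1$), so $L_3$ is strictly decreasing in $x$ for fixed $y$. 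In all three cases the sought extremum lies on the right half of $\partial U_T$, composed of $\Gamma_2^+$, $\Gamma_3^+$, $\Gamma_1$ in order.

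On $\Gamma_1$, $y$ is the constant $1/(1+2T)$, so $L_1,L_2$ strictly increase and $L_3$ strictly decreases in $x$; the optimum on $\Gamma_1$ is attained at its right endpoint, which is precisely the corner $C=(a^{(0)},b^{(0)})$. On $\Gamma_2^+$, $x=t$ and $y=((1+T)t-1)/(1+2T)$ are both affine increasing in $t$, so $L_1,L_2$ are increasing; substituting into $L_3$ collapses it to $T(2-t)/(2T+(2+3T)t)$, whose derivative in $t$ is $-T(4+8T)$ over a positive square, so $L_3$ is strictly decreasing; the optimum on $\Gamma_2^+$ therefore sits at its upper endpoint, where $\Gamma_2^+$ meets $\Gamma_3^+$. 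Given these two, the theorem will follow by transitivity across the three arcs once the same strict monotonicities are established on $\Gamma_3^+$ as $t$ runs from $0$ to $\pi/(2+2T)$.

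The main obstacle is exactly this last step. Here the plan is to differentiate $A(t)$ and $B(t)$, clear the positive common denominator $(T\sin(2+3T)t-(2+3T)\sin Tt)^2$, and reduce each of $A'(t)$, $A'(t)+B'(t)$, and the numerator of $-L_3'(t)$ to a positive multiple of a single trigonometric template. After sum-to-product manipulation and a substitution of the form $\tau\propto t$ together with $\alpha$ a $T$-dependent parameter landing in $(0,1/2]$, I expect that template to be exactly
\[
G(\tau,\alpha)=(1+\alpha)\cos(3-2\alpha)\tau+(3-\alpha)\cos(1-2\alpha)\tau-(1-\alpha)^2\cos 3\tau-(1+\alpha)(3-\alpha)\cos\tau,
\]
whose positivity on $(0,\pi/2)\times(0,1/2]$ is Lemma~\ref{l2}. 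The delicate point is producing the same $G$-template, up to positive factors, from three distinct derivative numerators; this requires patient bookkeeping with the identities, and is the only genuinely computational piece. Everything else is linear or routine, and Lemmas~\ref{l1} and \ref{l2}---evidently tailored to this purpose---supply the three needed monotonicities simultaneously, pinning all the extrema at $C$.
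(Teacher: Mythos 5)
Your scaffolding is exactly the paper's: no interior stationary points, symmetry in $x$ to reduce to the right half of the boundary, elementary monotonicity on $\Gamma_1$ and $\Gamma_2^{+}$ (your explicit reduction of $L_3$ on $\Gamma_2^{+}$ to $T(2-t)/(2T+(2+3T)t)$ with derivative numerator $-T(4+8T)$ is correct, and is a detail the paper glosses over), and chaining the three arcs to the corner $C=(a^{(0)},b^{(0)})$. The gap is that the one step carrying all of the technical weight --- the three monotonicities on $\Gamma_3^{+}$ --- is left as a declared expectation (``I expect that template to be exactly $G$'') rather than carried out. Since Lemma~\ref{l2} exists solely to serve this step, and since the \emph{signs} of the cofactors multiplying $G$ are precisely what decide increase versus decrease for each of the three functions, a proof that stops at ``patient bookkeeping should produce $G$ up to positive factors'' has not yet proved the theorem.

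For completeness: the paper's execution confirms your expectation, and the bookkeeping is lighter than you anticipate because of one structural fact. With $\tau=(1+T)t$, $\alpha=1/(1+T)\in(0,1/2]$, write $x=\hat U/\hat W$, $y=\hat V/\hat W$ as in the parametrization of $\Gamma_3^{+}$; then each of $\hat U',\hat V',\hat W'$ factors as a constant times a product of two sines, namely
\[
\hat U'=\tfrac{8}{\alpha}(1-\alpha)\sin\alpha\tau\,\sin(2-\alpha)\tau,\quad
\hat V'=\tfrac{2}{\alpha}(1-\alpha^2)\sin\alpha\tau\,\sin\tau,\quad
\hat W'=\tfrac{2}{\alpha}(3-\alpha)(1-\alpha)\sin(2-\alpha)\tau\,\sin\tau,
\]
so every quotient-rule numerator is a two-term Wronskian that collapses to an explicit sine factor of known sign times $G(\tau,\alpha)$:
\[
\Bigl(\tfrac{\hat U}{\hat W}\Bigr)'=\frac{2(1-\alpha)\sin(2-\alpha)\tau}{\hat W^2}\,G(\tau,\alpha),\qquad
\Bigl(\tfrac{\hat V}{\hat W}\Bigr)'=\frac{2(1-\alpha)\sin\tau}{\hat W^2}\,G(\tau,\alpha),
\]
\[
\Bigl(\tfrac{\hat V+\hat W}{\hat U}\Bigr)'=-\frac{4(1-\alpha)\cos\tau\,\sin(1-\alpha)\tau}{\hat U^2}\,G(\tau,\alpha).
\]
Two small economies used there that you may as well adopt: prove $x'>0$ and $y'>0$ separately (which gives $(x+y)'>0$ for $L_2$), and handle $L_3$ through $\hat L_3=(1+y)/x=(\hat V+\hat W)/\hat U$ rather than by differentiating $L_3$ along the curve directly; this keeps all three computations in the same Wronskian form. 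With these identities in place, Lemma~\ref{l2} finishes the argument exactly as you outlined.
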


\begin{proof}
Because functions $L_j(x,y)$  do not have stationary points in $U_T$ and  due to the symmetry of the domain $U_T$, we can restrict ourselves to the boundary and to the case $x\ge0$. Obviously, the functions $L_1(x,y)$ and $L_2(x,y)$ increase on the curves $\Gamma_1$ and $\Gamma_2^{+}$.
And the function $L_3(x,y)$ decreases on these curves. It remains to consider the behavior of these functions on the
curve $\Gamma_3^{+}$. 

Since $\frac{1-x+y}{1+x+y}=1-\frac2{1+\frac{1+y}x}$, instead of the initial functions
$L_2(x,y)$, $L_3(x,y)$ it is more convenient to consider the functions $\hat{L}_2(x,y)=y$, $\hat{L}_3(x,y)=\frac{1+y}x$. 

We represent the parametric equations defining the curve $\Gamma_3^{+}$ in the form
$$
\Gamma_3^{+}=\left\{(x,y):\,x=\frac{U(t)}{W(t)},\,y=\frac{V(t)}{W(t)},\,t\in\left[0,\frac{\pi}{2+2T}\right]\right\},
$$ 
where
\begin{gather*}
U(t)=2T\sin{(2+2T)t}-(2+2T)\sin{2Tt},\\
V(t)=T\sin{(2+T)t}-(2+T)\sin{Tt},\\
W(t)=T\sin{(2+3T)t}-(2+3T)\sin{Tt}.
\end{gather*}
Make the substitution $\tau=(1+T)t$, $\alpha=\frac1{1+T}$. Then $\tau\in[0,\pi/2]$, $\alpha\in(0,1/2]$. The functions
$U(t)$, $V(t)$, $W(t)$ transform into the functions 
\begin{gather*}
\hat{U}(\tau)=\frac1{\alpha}(2(1-\alpha)\sin{2\tau} - 2\sin{(2(1-\alpha))\tau}),\\
\hat{V}(\tau)=\frac1{\alpha}((1-\alpha)\sin{(1+\alpha)\tau} - (1+\alpha)\sin{(1-\alpha)\tau}),\\
\hat{W}(\tau)=\frac1{\alpha}((1-\alpha)\sin{(3-\alpha)\tau} - (3-\alpha)\sin{(1-\alpha)\tau}),
\end{gather*}
and
$$
\Gamma_3^{+}=\left\{(x,y):\,x=\frac{\hat{U}(\tau)}{\hat{W}(\tau)},
\,y=\frac{\hat{V}(\tau)}{\hat{W}(\tau)},\,\tau\in\left[0,\frac{\pi}2\right]\right\}.
$$

Find the derivatives
\begin{gather*}
\hat{U}'(\tau)=\frac8{\alpha}(1-\alpha)\sin{\alpha\tau}\sin{(2-\alpha)\tau},\\
\hat{V}'(\tau)=\frac2{\alpha}(1-\alpha)(1+\alpha)\sin{\alpha\tau}\sin{\tau},\\
\hat{W}'(\tau)=\frac2{\alpha}(3-\alpha)(1-\alpha)\sin{(2-\alpha)\tau}\sin{\tau}.
\end{gather*}
Then

\begin{gather*}
\left(\frac{\hat{U}(\tau)}{\hat{W}(\tau)}\right)'=
\frac{2(1-\alpha)\sin{(2-\alpha)\tau}}{\alpha^2\left(\hat{W}(\tau)\right)^2}G(\tau,\alpha),\\
\left(\frac{\hat{V}(\tau)}{\hat{W}(\tau)}\right)'=
\frac{2(1-\alpha)\sin{\tau}}{\alpha^2\left(\hat{W}(\tau)\right)^2}G(\tau,\alpha),\\
\left(\frac{\hat{V}(\tau)}{\hat{U}(\tau)}\right)'=
\frac{2(1-\alpha)\sin{\alpha\tau}}{\alpha^2\left(\hat{U}(\tau)\right)^2}G(\tau,\alpha),\\
\left(\frac{\hat{W}(\tau)}{\hat{U}(\tau)}\right)'=
-\frac{2(1-\alpha)\sin{(2-\alpha)\tau}}{\alpha^2\left(\hat{U}(\tau)\right)^2}G(\tau,\alpha),
\end{gather*}
and
$$
\left(\frac{\hat{V}(\tau)+\hat{W}(\tau)}{\hat{U}(\tau)}\right)'=
-\frac4{\alpha^2\left(\hat{U}(\tau)\right)^2}(1-\alpha)\cos{\tau}\sin{(1-\alpha)\tau}G(\tau,\alpha).
$$

By Lemma~\ref{l2}, $G(\tau,\alpha)>0$ when $\tau\in(0,\pi/2)$, $\alpha\in(0,1/2]$. Therefore, 
$$
\left(\frac{\hat{U}(\tau)}{\hat{W}(\tau)}\right)'>0,\;
\left(\frac{\hat{V}(\tau)}{\hat{W}(\tau)}\right)'>0,\;
\left(\frac{\hat{V}(\tau)+\hat{W}(\tau)}{\hat{U}(\tau)}\right)'<0.
$$
Hence, we proved that on the curve $\Gamma_3^{+}$:

1) the function $L_1(x,y)=x$ increases; 

2) the function $\hat{L}_2(x,y)=y$ increases;

3) the function $\hat{L}_3(x,y)=\frac{1+y}x$ decreases.

The theorem is proved. 
\end{proof}

It follows from the Theorem that hypotheses {\it c), d), e)} are true. 

Indeed, the point $(x,y)$ in the domain corresponds to a univalent polynomial $z+xz^{1+T}+yz^{1+2T}.$ In particular, among univalent
polynomials $\displaystyle z+xz^{1+T}+\frac1{1+2T}z^{1+2T}$ the polynomial $S^{(T)}(z)$ has the largest coefficients, the corresponding point $(x,y)$ is at the upper right corner of the region. This proves conjecture c).

Further, due to symmetry of the region and situation in an upper half-plane we might assume both $x$ and $y$ to be positive in consideration of conjecture d). In that case the maximum of the polynomial is attained at $z=1,$ i.e.
$$
\max_{z\in\bar{\mathbb D}}\left\{ z+xz^{1+T}+yz^{1+2T}\right\}=1+x+y.
$$
The maximum of $1+x+y$ is achieved at the upper right corner of the domain, hence the value $\abs{S^{(T)}(1)}$ is the largest one, which proves conjecture d).

Conjecture e) follows by similar argument.

The extremal polynomial that corresponds to
hypothesis {\it e)} is unique: $S^{(T)}(z)$; in the problems corresponding to hypotheses {\it c)} and {\it d)}, these are
the polynomials $S^{(T)}(z)$, $-S^{(T)}(-z)$.

\section{Discussion of the results}

Of course, the next problem in the queue for consideration is the case of polynomials of degree $N=3T + 1,$ or more 
generally, $N = (n - 1)T + 1$ for $n\ge 4.$ The domains of univalence in these cases are not known even for $T=1$ and $n \geq 5$. This does not prevent considering hypotheses {\it a)-e)} in the general situation, but greatly complicates the solution. 

The next natural question is the question about the maximum contraction of the unit disk in the direction of the real axis
for univalent polynomials with the schlicht normalization $F(0)=0$, $F'(0)=1$. Note that in this case the Suffridge
polynomials are not optimal \cite{Dmitrishin2019UnivalentPA,Br1}. Thus, changing the normalization $F(0)=0$, $F'(0)=1$
to $F(0)=0$, $F(1)=1$ leads not only to a change of the extreme value, but also to a change of the extremal polynomial,
which is quite interesting.

In \cite{DST}, there were introduced the polynomials $s^{(T)}(z)$ that formally differ from the polynomials $S^{(T)}(z)$
defined by \eqref{Ts}. The following asymptotics was established:
$$
\frac1{s^{(T)}(1)}\abs{s^{(T)}\left(e^{i\frac{\pi}T}\right)} \sim c_T n^{-2/T}
$$
as $n\rightarrow\infty$, where $c_T=\pi^{2/T-1}\Gamma^2(1/T+1/2)$, $\Gamma(\cdot)$ is the gamma function.

However, already \cite{DSST} introduces and proves, for special cases, the conjecture that the polynomials $s^{(T)}(z)$
coincide with the polynomials $S^{(T)}(z)$. In the same work, the following relation was shown:
$$
\max\limits_{z\in\overline{\bD}}\left\{\abs{s^{(T)}(z)}\right\}=s^{(T)}(1)\sim\frac1{c_T 2^{2/T}}n^{2/T}
$$ 
as $n\rightarrow\infty,$ which is a quantitative refinement of hypothesis {\it d)}. 
 

\section{Acknowledgment}

The authors would also like to thank Larie Ward for her help in preparation of the manuscript.

\bibliographystyle{unsrt}
\bibliography{tribib}

\end{document}